\documentclass[a4paper,11pt,twoside]{article}

\usepackage[top=28mm,bottom=28mm]{geometry}
\usepackage[english]{babel}
\usepackage[utf8]{inputenc}
\usepackage{times}
\usepackage{amsmath,amssymb,amsfonts,amsthm}
\usepackage{setspace}
\usepackage{color}
\usepackage{paralist}
\usepackage{pgfplots}
\usepackage{mathrsfs}
\usepackage{mathtools}
\usepackage{bm}
\usepackage{tikz-cd}
\usepackage{tikz}
\usetikzlibrary{chains,
  matrix,
  positioning,
  scopes,arrows
}
\usepackage{pdfpages}
\usepackage{bibentry}
\usepackage{multirow}
\usepackage[bookmarks=false]{hyperref}


\newtheorem{theorem}{Theorem}[section]
\newtheorem{lemma}[theorem]{Lemma}
\newtheorem{proposition}[theorem]{Proposition}
\newtheorem{corollary}[theorem]{Corollary}
\theoremstyle{definition}
\newtheorem{definition}[theorem]{Definition}
\newtheorem{example}[theorem]{Example}
\newtheorem{notation}[theorem]{Notation}
\theoremstyle{remark}
\newtheorem{remark}[theorem]{Remark}


\newcommand{\Z}{{\mathbb Z}}

\newcommand{\C}{{\mathbb C}}

\newcommand{\ts}{{\tilde{S}}}

\newcommand{\Osh}{{\cal O}}
\newcommand{\resdot}{\scriptscriptstyle \bullet}

\newcommand{\tenf}{{\textbf{F} \otimes \textbf{F}}}
\renewcommand{\P}{{\mathbb P}}


\DeclareMathOperator{\im}{im}
\DeclareMathOperator{\depth}{depth}

\DeclareMathOperator{\coker}{coker}
\DeclareMathOperator{\Ext}{Ext}

\DeclareMathOperator{\grade}{grade}
\DeclareMathOperator{\pd}{projdim}

\DeclareMathOperator{\ann}{ann}
\DeclareMathOperator{\Proj}{Proj}
\DeclareMathOperator{\Spec}{Spec}
\DeclareMathOperator{\Hom}{Hom}

\DeclareMathOperator{\rank}{rank}
\DeclareMathOperator{\id}{id}

\AtEndDocument{
  \par
  \medskip
  \begin{tabular}{@{}l@{}}
    \textsc{I. Stenger, Universit\"{a}t des Saarlandes, Campus E2 4,} \\
     \textsc{66123 Saarbr\"{u}cken, Germany.}\\
    \textit{E-mail address}: \texttt{stenger@math.uni-sb.de}
  \end{tabular}}


\begin{document}
\title{A structure result for Gorenstein algebras of odd codimension}  
\author{Isabel Stenger}
\date{\today}

\maketitle  
\begin{abstract}The famous structure theorem of Buchsbaum and Eisenbud gives a complete characterization of Gorenstein ideals of codimension 3 and their minimal free resolutions.  We generalize the ideas of Buchsbaum and Eisenbud from Gorenstein ideals to Gorenstein algebras and present a description of Gorenstein algebras of any odd codimension. As an application we study the canonical ring of a numerical Godeaux surface.
\par \medskip
\noindent {\bf Keywords:} Gorenstein rings, minimal free resolutions, Godeaux surfaces
\end{abstract}

\section{Introduction}
The description of Gorenstein rings plays a central role in algebra and geometry. 
Gorenstein ideals in codimension 1 and 2 are known to be complete intersections. Buchsbaum and Eisenbud (\cite{BuchsbaumEisenbud77}) showed that any Gorenstein ideal of codimension 3 is generated by $2k+1$ elements, $k\geq 1$, which are the submaximal Pfaffians of a skew-symmetric matrix of size $2k+1$. Codimension 4 Gorenstein ideals are not fully understood but there are partial characterization results due to Kustin-Miller (\cite{KustinMiller82}) and Reid (\cite{Reid13}). 

The non-existence of any structure theorem for Gorenstein ideals in higher codimension is a main motivation for the following generalization. Let $I$ be a Gorenstein ideal of codimension $d \geq 4$ in a regular local ring or a graded polynomial ring $T$. Instead of considering $R = T/I$ as an $T$-module and describing its minimal free resolution, the idea is to reduce the codimension by considering $T$ as an algebra over some other regular local ring $S$. More precisely, let $\varphi\colon S \rightarrow R$ be a ring homomorphism which induces on $R$ the structure of a finitely generated $S$-module. Let us further assume that $R$ is a perfect Cohen-Macaulay $S$-module of codimension $ c= \dim S - \dim_S R \leq 3$. Then the Auslander-Buchsbaum formula implies that the minimal free resolution of $R$ as an $S$-module has length $\leq 3$ and the question is whether there exist a general description of such a minimal free resolution.  In codimension 1, \cite{CataneseRegular} gives a characterization for the case that $R$ is the canonical ring of a regular surface of general type which holds also in greater generality.  
 Gorenstein algebras of codimension 2 and their special symmetric minimal free resolutions were described in \cite{Grassi96} and \cite{Boehning05}. 
 
 In this article, we focus on Gorenstein algebras whose codimension is an odd number and adapt the ideas of Buchsbaum and Eisenbud to Gorenstein algebras after imposing some mild additional condition on the homomorphism $\varphi$. 

 Throughout this article, all considered rings are commutative Noetherian with identity. To start with, we briefly recall some notation. 
 \begin{definition}[\cite{BuchsbaumEisenbud77}, Section 2] \label{def_alternating}
   Let $S$ be a ring, and let $F$ be a finitely generated free $S$-module. We call a map $f\colon F \rightarrow F^{\ast} := \Hom_S(F,S)$ \textit{alternating} if, with respect to some (and hence any) basis and dual basis of $F$ and $F^{\ast}$ respectively, the matrix corresponding to $f$ is skew-symmetric, or equivalently, if $f^\ast = -f$.  
\end{definition}

\begin{definition}\label{def_perfect}[\cite{BrunsHerzog98}, Chapter 1]
Let $S$ be a ring and let $M$ be a finitely generated $S$-module. Then the $\textit{grade}$ of $M$ is defined as 
\[ \grade M =  \min\{i \mid \Ext^i_S(M,S) \neq 0\}. \]
Moreover, we call $M$ \textit{perfect}, if $\grade M = \pd_S M$. 
\end{definition}

\begin{definition} Let $(S,\mathfrak{m})$ be a regular local ring, and let $R$ be a ring. Moreover, let $\varphi\colon S \rightarrow R$ be a homomorphism of rings which induces on $R$ the structure of a finitely generated $S$-module. The number $c = \dim S -  \dim_S R$ is called the \textit{codimension} of $R$. We call $R$ a \textit{Gorenstein ($S$-)algebra} or \textit{relatively Gorenstein of codimension $c$} if $R$ is a perfect $S$-module and 
  \begin{equation*}
  R \cong \Ext^c_S(R,S) 
  \end{equation*} 
  as $R$-modules.
\end{definition}
\begin{remark}\label{rem_modulestructureExt}
	Note that the functoriality of $\Ext_S^{c}(,S)$ induces the structure of an $R$-module on $\Ext^c_S(R,S)$. Indeed if $r \in R$  and $\mu_r$ denotes the multiplication map on $R$ by $r$, then, for any $g \in \Ext_S^{c}(R,S)$, we set $rg =  \Ext_S(\mu_r,S)(g) \in \Ext_S^{c}(R,S)$.
\end{remark}
    
The main result of this article is the following:
\begin{theorem}\label{thm_mainthm}
	Let $(S,\mathfrak{m})$ be a regular local ring, and let $R$ be any ring. Let $\varphi \colon S \rightarrow R$ be a ring homomorphism which induces on $R$ the structure of a finitely generated  $S$-module. Furthermore, set $\tilde{S}  = S/\ann_S R$, and let $\tilde{\varphi}\colon \tilde{S} \rightarrow R$ be the induced injective homomorphism. Assume that the condition 
	$$ (\diamondsuit) \text{\quad  $\tilde{\varphi}_p \colon \tilde{S}_p \rightarrow R_p$ is an isomorphism for all minimal primes of $\tilde{S}$ }$$
	holds.
	Let  $c  = \dim S - \dim_S R$ be the codimension. If $c = 2m+1$ for some $m\geq 0$, 
	the following conditions are equivalent:
	\begin{itemize}
	\item $R$ is a Gorenstein algebra of codimension $c$.
	\item There exists a minimal free resolution of $R$ as an $S$-module of type
	\begin{equation}\label{eq_minres}
	 0 \leftarrow R \xleftarrow{d_0}
	 F_0  \xleftarrow{d_1} F_1 \leftarrow \cdots \xleftarrow{d_{m}}  F_m \xleftarrow{d_{m+1}} F_m^\ast \xleftarrow{d_{m}^\ast} \cdots \leftarrow F_1^\ast \xleftarrow{d_1^\ast} F_0^\ast \leftarrow 0,
	\end{equation}
	where $F_0  =  S \oplus  F_0' $ for some free module $F_0'$, $d_0 = ( \varphi  \mid  d_0')$ and $d_{m+1}^\ast = (-1)^m d_{m+1}$. 
	\end{itemize}
\end{theorem}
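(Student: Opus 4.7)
The plan is to generalize the Buchsbaum--Eisenbud proof of the codimension-$3$ case (\cite{BuchsbaumEisenbud77}) to the present setting. One implication will be a direct calculation: I dualize the resolution and use its built-in self-duality to compute $\Ext$. The converse will require constructing the self-duality from scratch, by combining uniqueness of minimal free resolutions with a symmetrization of the resulting comparison isomorphism. This symmetrization is the hard part, and it is where condition $(\diamondsuit)$ enters essentially.

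\textbf{Resolution $\Rightarrow$ Gorenstein.} Given a resolution of the form (\ref{eq_minres}), I would apply $\Hom_S(-,S)$ to the truncated complex. The identifications $F_{c-i} = F_i^*$ together with the middle sign relation $d_{m+1}^*=(-1)^m d_{m+1}$ produce an explicit isomorphism between the dualized complex and the original one, up to a shift by $c$. Reading off cohomology then yields $\Ext^i_S(R,S) = 0$ for $i < c$ (giving perfection of $R$ as an $S$-module of grade $c$) and $\Ext^c_S(R,S) \cong \coker(d_1) = R$ as $S$-modules. To promote the latter to $R$-linearity in the sense of Remark \ref{rem_modulestructureExt}, I would lift multiplication by $r \in R$ to an endomorphism of $F_\bullet$ and check, via the self-duality, that it induces multiplication by $r$ on the cokernel side.

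\textbf{Gorenstein $\Rightarrow$ resolution: setup.} For the converse I start with any minimal free resolution $F_\bullet \twoheadrightarrow R$; perfection together with Auslander--Buchsbaum force its length to be exactly $c = 2m+1$. Since $1_R \notin \mathfrak{m}R$, I can include $1_R$ in a minimal generating set, which immediately yields the splitting $F_0 = S \oplus F_0'$ with $d_0 = (\varphi \mid d_0')$. Dualizing $F_\bullet$ and combining perfection with the Gorenstein isomorphism $\Ext^c_S(R,S) \cong R$, the shifted complex $\Hom_S(F_\bullet, S)[c]$ is another minimal free resolution of $R$ of length $c$. Uniqueness of minimal free resolutions then provides a comparison isomorphism of complexes $\Phi\colon F_\bullet \xrightarrow{\sim} \Hom_S(F_\bullet, S)[c]$.

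\textbf{The main obstacle.} The hardest step is to modify $\Phi$ into a \emph{self-dual} comparison, i.e., one satisfying $\Phi_{c-i}^* = (-1)^m \Phi_i$ after canonical biduality; once this is done, identifying $F_{m+1}$ with $F_m^*$ via the middle component of $\Phi$ produces the shape (\ref{eq_minres}) with $d_{m+1}^* = (-1)^m d_{m+1}$ immediately. The naive symmetrization $\Phi + (-1)^m \Phi^*$ is a chain map but not a priori an isomorphism, and this is precisely where $(\diamondsuit)$ enters. At each minimal prime $p$ of $\tilde S$ the map $\tilde\varphi_p$ is an isomorphism, so $R_p$ is generated over $\tilde S_p$ by $1$; locally, $R$ is a genuine quotient of $S$ through the distinguished $S$-summand of $F_0$, and the classical BE-type picture applies. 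This rigidifies $\Phi$ generically and lets me normalize so that its $S \to S$ component is the identity; with that normalization, $\Phi + (-1)^m \Phi^*$ can be checked to be an isomorphism at minimal primes and hence, via Nakayama applied to its cokernels, globally. The parity $(-1)^m$ is forced by the interaction of canonical biduality with the homological shift by $c = 2m+1$, and the remaining sign bookkeeping transports the self-duality of the symmetrized $\Phi$ into exactly the shape (\ref{eq_minres}).
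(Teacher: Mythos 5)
Your ``Gorenstein $\Rightarrow$ resolution'' implication takes a genuinely different route from the paper, and that route has gaps. The paper does not symmetrize a comparison isomorphism; instead it constructs a homotopy-commutative DG-algebra structure $\mu\colon \tenf \to \textbf{F}$ with unit $e_0$ (Proposition~\ref{prop_multiplication}) and defines the comparison maps directly as the pairing $t_i(a)(b)=\sigma(ab)$. This pairing is $(\pm)$-symmetric by the commutativity of $\mu$, so no averaging is ever needed; the only thing left to prove is that the $t_i$ are isomorphisms, which is done by checking the induced map on $H_0$ and applying Proposition~\ref{prop_iso}. Your proposed symmetrization $\Phi+(-1)^m\Phi^*$ runs into a real obstruction: it is a chain map, but the map it induces on $H_0$ is (after composing with a Gorenstein isomorphism and applying Proposition~\ref{prop_iso}) multiplication by an element $a+b\in R$ where only $a$ is known to be a unit, and nothing prevents $b=-a$. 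Your suggested remedy does not close this gap. First, the minimal primes of $\tilde S$ have height $c$, not height $\dim S$, so they are not the maximal ideal of $S$ unless $\dim_S R=0$; ``Nakayama applied to the cokernel'' from information at those primes is not a valid step --- the correct test is whether the induced endomorphism of $R$ is an isomorphism, which you have not established. Second, the normalization ``so that its $S\to S$ component is the identity'' is not available, because $F_c^{\ast}$ carries no canonical splitting $S\oplus(\text{free})$ analogous to $F_0=S\oplus F_0'$ --- that identification is exactly what has to be produced, not assumed. Third, even granting a normalization making the $H_0$-map multiplication by a unit times $2$, you would need $2$ invertible in $S$, a hypothesis the theorem does not impose and which the DG-algebra approach avoids.

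There is also a gap in your first implication. To prove $R$-linearity of the comparison isomorphism $u\colon R\to\Ext^c_S(R,S)$ you propose to lift $\mu_r$ to an endomorphism of $\textbf{F}$ and ``check via the self-duality'' that it induces multiplication by $r$. But unwinding this, the check amounts to showing $U^{-1}\circ\Hom_S(\tilde\mu_r,S)[c]\circ U$ is homotopic to $\tilde\mu_r$, which on $H_0$ is precisely the assertion $u^{-1}\circ\Ext_S(\mu_r,S)\circ u=\mu_r$ --- i.e.\ the $R$-linearity you are trying to prove. The paper avoids this circularity by observing that $u$ is $\tilde S$-linear and invoking Proposition~\ref{prop_iso} and Remark~\ref{rem_isob}, which is where the hypothesis $(\diamondsuit)$ enters this direction; your proposal does not otherwise use $(\diamondsuit)$ here, but it is needed.
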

\begin{notation} Let $S,R$ and $\varphi$ be as in Theorem \ref{thm_mainthm}.  In the following, we denote by $e_0$ the element of $F_0$ corresponding to $1_S \in S \hookrightarrow F_0$. Thus, by the choice of $d_0$ in \eqref{eq_minres}, we have $d_0(e_0) = \varphi(1_S) = 1_R$. 
\end{notation}

\begin{remark} Let us see how Theorem \ref{thm_mainthm} recognizes the known results in codimension 1 and 3. 
So let us assume that $R = S/I$ for some ideal $I \subseteq S$ of codimension $c$. Then the condition $(\diamondsuit)$ is trivially satisfied. If $c=1$, then $d_1\colon F_0 = S \leftarrow F_1$ is a symmetric matrix, and hence $I$ is generated by one element. If $c = 3$, then $\rank F_1 = 1+ \rank d_2$ is an odd number as any skew-symmetric matrix has an even rank and the result reduces to (a reformulation of) the structure theorem of Buchsbaum-Eisenbud for codimension 3 Gorenstein ideals.

\end{remark}
\begin{remark} Note that the equivalent conditions on $R$ in Theorem \ref{thm_mainthm} imply that $R$ is a Cohen-Macaulay $S$-module. Indeed, by the Auslander-Buchsbaum formula we have
	$$ \depth R = \depth S - \pd_S(R) = \dim S - \pd_S(R) = \dim_S(R).$$  
\end{remark}
\begin{remark} 
Note that if $R$ is an integral domain, then so is $\tilde{S}$, and the fact that $R$ is a finitely generated $\tilde{S}$-module implies that the condition $(\diamondsuit)$ is equivalent to $Q(\tilde{S}) \cong Q(R)$, or geometrically that the corresponding integral affine schemes are birational.   
\end{remark}
Let us first show how condition $(\diamondsuit)$ implies that every $S$-linear homomorphism $g\colon R \rightarrow R$ is also $R$-linear, more precisely is the multiplication by an element in $R$:
\begin{proposition}\label{prop_iso}
	Let $S$ be a local Cohen-Macaulay ring and let
	$\varphi\colon S \rightarrow R$ be a ring homomorphism which induces on $R$ the structure of a finitely generated Cohen-Macaulay $S$-module. Furthermore, let $I = \ann_S R$ and set $\tilde{S} = S/I$. Assume that the induced homomorphism  $\tilde{S}_p \rightarrow R_p$ is an isomorphism for every minimal prime ideal of $\tilde{S}$.
	Then the natural homomorphism $R \rightarrow \Hom_S(R,R)$ is an isomorphism. 
\end{proposition}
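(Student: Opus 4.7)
The natural map $\psi\colon R \to \Hom_S(R,R)$, $r \mapsto \mu_r$, is injective since $\mu_r(1_R) = r$. For surjectivity, given $g \in \Hom_S(R,R)$, I set $r := g(1_R)$ and reduce to showing that the $S$-linear endomorphism $h := g - \mu_r$, which by construction already satisfies $h(1_R) = 0$, vanishes identically. Because $I = \ann_S R$ annihilates $R$, any such $h$ is automatically $\tilde{S}$-linear, so from this point on I work in $\Hom_{\tilde{S}}(R,R)$.

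The local step is the easy one. For any minimal prime $p$ of $\tilde{S}$, the hypothesis gives $R_p \cong \tilde{S}_p$ as $\tilde{S}_p$-modules, hence
\[ \Hom_{\tilde{S}_p}(R_p,R_p) \cong \Hom_{\tilde{S}_p}(\tilde{S}_p,\tilde{S}_p) \cong \tilde{S}_p \cong R_p, \]
and every $\tilde{S}_p$-endomorphism of $R_p$ is multiplication by its value at $1$. Applied to $h_p$, whose value at $1$ is the image of $h(1_R)=0$, this yields $h_p = 0$ for every $p \in \Min(\tilde{S})$.

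The main obstacle is the global step: passing from vanishing at minimal primes of $\tilde{S}$ to vanishing on all of $R$. The plan is to establish the inclusion $\Ass_S(R) \subseteq \Min(\tilde{S})$; once this is known, any nonzero submodule of $R$---in particular $\im h$, were it nonzero---would have an associated prime lying in $\Ass_S(R) \subseteq \Min(\tilde{S})$, contradicting the vanishing $h_p = 0$ proved in the previous step. To prove the inclusion I would exploit that $S$ is a local Cohen-Macaulay ring (hence catenary and equidimensional) and that $R$ is Cohen-Macaulay over $S$: any $p\in \Ass_S(R)$ satisfies $\dim S/p = \dim_S R = \dim \tilde{S}$, which by catenarity translates to $\height p = \height I$; combined with $p \supseteq I$ this forces $p \in \Min(\tilde{S})$. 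Piecing the three steps together yields $h = 0$, and hence the surjectivity of $\psi$.
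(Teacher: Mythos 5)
Your proof is correct and follows essentially the same route as the paper's: reduce to showing that an $S$-linear endomorphism vanishing at $1_R$ is zero, kill it at the minimal primes of $\tilde{S}$ using the localization hypothesis, and then use the Cohen--Macaulay property of $R$ to show $\Ass_S(R)\subseteq\Min(\tilde{S})$ so that a nonzero image would produce a contradiction. The only cosmetic difference is that you derive $\Ass_S(R)\subseteq\Min(\tilde{S})$ via equidimensionality of associated primes plus catenarity of $S$, whereas the paper invokes directly that associated primes of a Cohen--Macaulay module are minimal in its support.
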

\begin{proof} First notice that any associated prime of $R$ as an $S$-module contains $I$. Thus, the minimal primes of $\ts$ coincides with the minimal primes of $R$ as an $S$-module (via the identification $\Spec(\ts) \rightarrow \Spec(S)$). On the other hand, $R$ being a Cohen-Macaulay $S$-module implies that every associated prime of $R$ as an $S$-module is minimal. Consequently, $I$ is unmixed and every associated prime is of height $\dim S - \dim R$.

For a minimal prime $p \subset \tilde{S}$, we have $\tilde{S}_p \cong R_p$ as $\tilde{S}_p$-modules by assumption. Thus,  
\begin{equation}\label{eq_min}
\Hom_{\tilde{S}_p}(R_p,R_p) \cong \tilde{S}_p \cong R_p. 
\end{equation}
 Now let $\psi\colon R \rightarrow R$ be an arbitrary $S$-linear homomorphism and set $\alpha :=  \psi-\psi(1)\id_R$. 
We want to show that $\alpha$ is the zero homomorphism.
	 The module $N = \im(\alpha)$ is an $\tilde{S}$-submodule of $R$ and for all minimal primes of $\ts$ we have $N_p = 0$ by \eqref{eq_min}. Suppose that $N \neq 0$. Then $N$ has an associated prime $q$ and $N_q \neq 0$. But $q$ also being an associated prime of $R$ and hence a minimal prime of $\ts$
	 gives a contradiction.
\end{proof}
\begin{remark}\label{rem_isob}
	 Let $S, R$ and $\varphi$ be as in the setting of Proposition \ref{prop_iso}. Then the statement remains correct if we consider homomorphisms from $R$ to an isomorphic $\tilde{S}$-module. More precisely, let $B$ be an $R$-module which is isomorphic to $R$ as an $\tilde{S}$-module, then the homomorphism 
	$$ B \rightarrow \Hom_{\ts}(R,B)$$ sending an element $b \in B$ to the multiplication homomorphism by $b$ is an isomorphism. 
\end{remark}
Using Proposition \ref{prop_iso} we are now able to prove one part of Theorem \ref{thm_mainthm}: 
\begin{proof} [Proof of Theorem \ref{thm_mainthm}, "(skew-) symmetric resolution $\Rightarrow$ Gorenstein"]

Let 
\[
  0 \leftarrow R \xleftarrow{d_0}
  F_0  \xleftarrow{d_1} F_1 \leftarrow \cdots \xleftarrow{d_{m}}  F_m \xleftarrow{d_{m+1}} F_m^\ast \xleftarrow{d_{m}^\ast} \cdots \leftarrow F_1^\ast \xleftarrow{d_1^\ast} F_0^\ast \leftarrow 0,
\]
 be a minimal free resolution of $R$ as an $S$-module with $d_{m+1}^\ast = (-1)^m d_{m+1}$. By assumption we have 
    $$ \Ext^i_S(R,S) = 0 \text{ for all } i < \depth(S) - \dim_S R  =  \dim S - \dim_S R = c. $$
   Thus, applying the functor $\Hom_S(,S)$ to the given minimal free resolution of $R$, we obtain a complex 
  \[
    0 \leftarrow \Ext^c_S(R,S) \leftarrow
    F_0  \xleftarrow{d_1} F_1 \leftarrow \cdots \xleftarrow{d_{m}}  F_m \xleftarrow{(-1)^m d_{m+1}} F_m^\ast \xleftarrow{d_{m}^\ast} \cdots \leftarrow  F_1^\ast \xleftarrow{d_1^\ast} F_0^\ast \leftarrow 0,
  \]
    which is a minimal free resolution of $\Ext^c_S(R,S)$. 
Comparing these two complexes, we construct a commutative diagram
\\
{\centering
	\begin{tikzpicture}[scale=\textwidth]
	\matrix(m)[matrix of math nodes,
	row sep=3.1em, column sep=2.4em,
	text height=1.1ex, text depth=0.21ex]
	{ 0 & R & F_0  & \cdots & F_m & F_m^\ast & \cdots  & F_0^\ast & 0  \\
	  0 & \Ext^c_S(R,S) & F_0  & \cdots & F_m & F_m^\ast & \cdots &   F_0^\ast & 0 \\};
	\path[->,font=\scriptsize,>=angle 90]
	(m-1-2) edge (m-1-1)
	        edge[dashed] node[left] {\tiny{$u$}} (m-2-2)
	(m-1-3) edge node[above] {\tiny{$d_0$}} (m-1-2)
	(m-1-4) edge node[above] {\tiny{$d_1$}}(m-1-3)
	(m-1-5) edge (m-1-4)
	(m-1-6) edge (m-1-5)
	(m-1-5) edge node[left] {\tiny{$\id_{F_m}$}} (m-2-5)  
	(m-2-2) edge (m-2-1)
	(m-2-3) edge (m-2-2)
	(m-2-6) edge (m-2-5)
	(m-2-5) edge  (m-2-4) 
	(m-2-4) edge node[above] {\tiny{$d_1$}}  (m-2-3)   
	(m-1-3) edge node[left] {\tiny{$\id_{F_0}$}} (m-2-3) 
	(m-2-6) edge node[above] {\tiny{$(-1)^m d_{m+1}$}} (m-2-5)  
    (m-1-6) edge node[above] {\tiny{$d_{m+1}$}}(m-1-5)  
    	    edge node[right] {\tiny{$(-1)^m\id_{F_m^\ast}$}} (m-2-6)  
    (m-1-7) edge (m-1-6) 	 
    (m-2-7) edge (m-2-6)  	 
    (m-1-8) edge (m-1-7)
     	    edge node[right] {\tiny{$(-1)^m\id_{F_0^\ast}$}} (m-2-8)  
    (m-2-8) edge (m-2-7)  
    (m-1-9) edge (m-1-8) 	 
    (m-2-9) edge (m-2-8)     	      
	;
	\end{tikzpicture} 
}
where $u$ is the $S$-linear isomorphism $u\colon R \rightarrow \Ext_{S}^{c}(R,S)$ induced by the chain maps. Now, since $u$ is also $\tilde{S}$-linear,
Proposition \ref{prop_iso} and Remark \ref{rem_isob} imply that $u$ is an $R$-linear homomorphism. Hence, 
\begin{equation*}\label{eq_extring}
R \cong 
\Ext_{S}^{c}(R,S),
\end{equation*} showing that $R$ is a Gorenstein $S$-algebra. 
\end{proof}
Next let us assume that $R$ is a Gorenstein algebra of odd codimension $c = 2m+1$. Then the minimal free resolution of $R$ as an $S$-module is of the form
 \[  0 \leftarrow R \leftarrow F_0 \xleftarrow{d_1} F_1 \leftarrow \cdots \leftarrow F_{c-1} \xleftarrow{d_c} F_c \leftarrow 0 \]
   Applying the functor $\Hom_S(,S)$ to the minimal free resolution of $R$, we obtain a complex 
    $$ 0 \rightarrow \Ext^c_S(R,S) \leftarrow F_c^\ast \xleftarrow{d_c^\ast} F_{c-1}^\ast \leftarrow \cdots \leftarrow  F_0^\ast \leftarrow 0$$ which is exact by the assumption on $R$. Moreover, as $R$ is a Gorenstein S-algebra, there exists an isomorphism $\tilde{\psi} 
    \colon \Ext^c_S(R,S) \rightarrow R$ which lifts to isomorphisms $\psi_i\colon F_{c-i} ^\ast \rightarrow F_i$. 
Using this chain isomorphism, we get a minimal free resolution of $R$ of the form 
\[
  0 \leftarrow R \xleftarrow{d_0}
  F_0  \xleftarrow{d_1} F_1 \leftarrow \cdots \xleftarrow{d_{m}}  F_m \xleftarrow{d_{m+1}\circ \psi_{m+1}} F_m^\ast \xleftarrow{d_{m}^\ast} \cdots \leftarrow F_1^\ast \xleftarrow{d_1^\ast} F_0^\ast \leftarrow 0.
\]
   Now, similar as in the proof of the Buchsbaum-Eisenbud structure theorem, the aim is to show the existence of an isomorphism of complexes which yields a symmetric respectively skew-symmetric map in the middle.  To do so, we first introduce a (skew-)commutative multiplication on the minimal free resolution $\textbf{F}  =\bigoplus F_i$ of $R$. Afterwards, we use this multiplication to define a map of chain complexes between the resolution and its dual. The last step is to show that this chain map is indeed an isomorphism of complexes. 
   
   \section{A Multiplicative Structure on the Minimal Free Resolution}
  
   Let $S,R$ and $\varphi\colon S \rightarrow R$ be as in Theorem \ref{thm_mainthm}. The assumption $(\diamondsuit)$ is not necessary for the following. 
   Recall that we denote by $e_0$ the element in $F_0$ corresponding to $1_S$ under the inclusion $S \hookrightarrow F_0$. 
 From now on $\textbf{F} = (F_{\resdot},d)$ will denote a (fixed) minimal free resolution of $R$ as an $S$-module
  \[  0 \leftarrow R \leftarrow F_0 \xleftarrow{d_1} F_1 \leftarrow \cdots  \leftarrow F_{c-1} \xleftarrow{d_c} F_c \leftarrow 0. \]
Moreover, we consider $\textbf{F}$ as an graded $S$-module by calling an element $x\in F_i$ homogeneous of (homological) degree $|x| = i$. 
We denote by $\textbf{F} \otimes \textbf{F}$ the tensor product of $\textbf{F}$ by itself
with differentials 
$$\delta(x \otimes y)  = d(x) \otimes y + (-1)^{|x|}x \otimes d(y)$$
for homogeneous elements $x,y \in \textbf{F}$.

 \begin{proposition}\label{prop_multiplication}
 There exists a chain map $\mu\colon \textbf{F} \otimes \textbf{F} \rightarrow \textbf{F}$ such that, writing $ab$ for $\mu(a\otimes b)$, the following holds:
 \begin{enumerate}[(i)]
 \item $d(fg) = d(f)g + (-1)^{|f|}fd(g)$ for any $f,g \in \textbf{F}$ homogeneous,\label{item:prop1}
 \item $\mu$ is homotopy-associative,\label{item:prop2}
 \item the element $e_0 \in F_0$ acts as a unit for $\mu$ on $\textbf{F}$, that means $e_0g = g = ge_0 $ for any $g \in \textbf{F}$, \label{item:unitelement}
 \item $fg = (-1)^{|f|\cdot|g|}gf$ for any $f,g \in \textbf{F}$ homogeneous, that means $\mu$ is \textit{commutative}. \label{item:commut}
  \end{enumerate} 
 \end{proposition}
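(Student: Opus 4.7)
The plan is to obtain $\mu$ by the standard lifting principle for free resolutions and to build it inductively so that (iii) and (iv) hold on the nose; (ii) will then follow from the uniqueness in the same lifting principle. The ambient observation is that $\tenf$ is a complex of free $S$-modules with $H_0(\tenf)=R\otimes_S R$, and that $\textbf{F}$ is a free resolution of $R$; hence the ring multiplication $\mu_R\colon R\otimes_S R\to R$ lifts to a chain map $\mu\colon \tenf\to\textbf{F}$, unique up to chain homotopy. Condition (i) is exactly the chain-map condition written in multiplicative notation.

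I would construct $\mu$ by induction on total degree $n=|x|+|y|$, enforcing (iii) and (iv) strictly at each stage. In degree zero, take $\mu_0\colon F_0\otimes F_0\to F_0$ to be any $S$-linear lift of $\mu_R$ through the surjection $d_0$; the prescriptions $\mu_0(e_0\otimes x):=x=:\mu_0(x\otimes e_0)$ together with strict symmetry $\mu_0(x\otimes y)=\mu_0(y\otimes x)$ can be imposed by making the choice on a basis of ordered pairs and extending symmetrically, using $d_0(e_0)=1_R$ for consistency. In the inductive step, for each graded $\Sigma_2$-orbit in degree $n$ not involving $e_0$, I would pick a representative $x\otimes y$ with $|x|\le|y|$, let $\mu(x\otimes y)$ be any $d_n$-preimage of $\mu(\delta(x\otimes y))\in F_{n-1}$, and set $\mu(y\otimes x):=(-1)^{|x||y|}\mu(x\otimes y)$. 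The element $\mu(\delta(x\otimes y))$ is a cycle by the inductive chain-map condition combined with $d\circ d=0$, and lies in the image of $d_n$ because $\textbf{F}$ is exact in positive degrees.

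To obtain (ii), I would apply the same comparison theorem once more: both $\mu\circ(\mu\otimes\id)$ and $\mu\circ(\id\otimes\mu)$ are chain maps from $\textbf{F}\otimes\textbf{F}\otimes\textbf{F}$ to $\textbf{F}$, and both induce on $H_0$ the triple product $a\otimes b\otimes c\mapsto abc$, which is well defined by associativity of $R$. Uniqueness of lifts up to homotopy then yields the desired chain homotopy.

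The main obstacle I anticipate is making (i) and (iv) compatible on diagonal elements of $\tenf$. When $|x|$ is odd, (iv) forces $2\mu(x\otimes x)=0$, which is a genuine restriction if $2$ is not invertible in $S$. The fix is to set $\mu(x\otimes x):=0$ on odd-degree basis vectors $x$: this is consistent with (i) since graded-commutativity of $\mu$ in lower degrees gives $\mu(d(x)\otimes x)=\mu(x\otimes d(x))$ when $|x|$ is odd (the sign $(-1)^{|x|(|x|-1)}$ equals $+1$), so the required boundary $d(\mu(x\otimes x))=\mu(d(x)\otimes x)-\mu(x\otimes d(x))$ vanishes and the zero lift is admissible.
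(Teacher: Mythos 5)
Your construction is correct, and it follows the same three-part strategy as the paper: obtain $\mu$ as a lift of $\tilde\mu\colon R\otimes_S R\to R$, derive (i) from the chain-map condition and (ii) from uniqueness of lifts up to homotopy, and then arrange (iii) and (iv) by normalizing the lift. The difference is in how (iv) is enforced. The paper first forms the quotient complex $S_2(\textbf{F})=(\textbf{F}\otimes\textbf{F})/M$ (the ``symmetric square'' with the twisted $\Sigma_2$-action), observes via the decomposition $S_2(\textbf{F})_n\cong V$, $V\oplus\bigwedge^2 F_{n/2}$, or $V\oplus S_2(F_{n/2})$ depending on $n\bmod 4$ that it is a complex of free modules, and then lifts $\gamma_0\colon S_2(\textbf{F})_0\to R$ to a chain map $\beta\colon S_2(\textbf{F})\to\textbf{F}$, normalizing $\beta_i(\pi_i(e_0\otimes g))=g$ successively; setting $\mu=\beta\circ\pi$ makes (iv) automatic because $\mu$ factors through $\pi$. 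You instead build $\mu$ directly on $\textbf{F}\otimes\textbf{F}$ by an explicit $\Sigma_2$-equivariant induction on total degree, which forces you to check by hand that the equivariant assignment is compatible with the chain-map condition on both members of each orbit and to address the diagonal in odd degree. That last point is exactly the sign subtlety that the paper absorbs into the identification of the diagonal part of $S_2(\textbf{F})_n$ as $\bigwedge^2 F_{n/2}$ when $n\equiv 2\pmod 4$; your verification that $\mu(\delta(x\otimes x))=0$ for $|x|$ odd (so that the zero lift is admissible) is the hands-on counterpart. Your route is a bit more elementary in that it avoids constructing $S_2(\textbf{F})$; the paper's is cleaner in that the comparison theorem applies in one stroke once freeness of $S_2(\textbf{F})$ is observed.
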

We will prove this statement in several steps. Let $\tilde{\mu}: R \otimes_S R \rightarrow R$ be the multiplication homomorphism.  First we consider the following diagram
\begin{equation*}
 \begin{tikzpicture}
 \matrix(m)[matrix of math nodes,
 row sep=2.9em, column sep=2.9em,
 text height=1.5ex, text depth=0.25ex]
 {0 & R \otimes R  & (\tenf)_0 & (\tenf)_1 
 & \cdots \\
 0 & R &  F_0 & F_1  & \cdots \\};
 \path[->,font=\scriptsize,>=angle 90]
 (m-1-2) edge (m-1-1)
         edge node[left] {$\tilde{\mu}$} (m-2-2) 
 (m-1-3) edge node[above] {$d_0 \otimes d_0$}  (m-1-2) 
 (m-1-4) edge node[above] {$\delta_1$}  (m-1-3)
 (m-1-5) edge node[above] {$\delta_2$} (m-1-4)
 (m-2-2) edge (m-2-1)
 (m-2-3) edge node[above] {$d_0$} (m-2-2) 
 (m-2-4) edge node[above] {$d_1$} (m-2-3)
 (m-2-5) edge node[above] {$d_2$} (m-2-4)
 ;
 \end{tikzpicture}
\end{equation*}
where the first row is a complex and the second row is exact.
\begin{lemma}\label{lem_liftmultiplication}
Any chain map $\mu\colon \tenf \rightarrow \textbf{F}$ which is a lift of $\tilde{\mu}$ satisfies properties \eqref{item:prop1}-\eqref{item:prop2} of Proposition \ref{prop_multiplication}.
 \end{lemma}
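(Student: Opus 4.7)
The plan is to derive both properties as formal consequences of $\mu$ being a chain lift of $\tilde\mu$, using nothing beyond the definition of the tensor differential and the standard comparison theorem for projective resolutions.

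For property \eqref{item:prop1}, I would just unwind definitions. Since $\mu$ is a chain map, $d \circ \mu = \mu \circ \delta$, and applied to a homogeneous element $f \otimes g \in \tenf$ this reads
\[
 d(fg) \;=\; \mu\bigl(\delta(f \otimes g)\bigr) \;=\; \mu\bigl(d(f) \otimes g\bigr) + (-1)^{|f|} \mu\bigl(f \otimes d(g)\bigr) \;=\; d(f)\,g + (-1)^{|f|} f\, d(g).
\]
So the Leibniz rule holds for \emph{every} chain lift $\mu$, independent of any further choices.

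For property \eqref{item:prop2}, I would compare the two chain maps
\[
 \mu_L = \mu \circ (\mu \otimes \id_{\textbf{F}}), \qquad \mu_R = \mu \circ (\id_{\textbf{F}} \otimes \mu)
\]
from $\textbf{F} \otimes \textbf{F} \otimes \textbf{F}$ to $\textbf{F}$. Both are genuine chain maps (compositions of chain maps with respect to the triple-tensor differential). The key observation is that each of them lifts the iterated multiplication $R \otimes_S R \otimes_S R \to R$, $a \otimes b \otimes c \mapsto abc$: this follows from $\mu$ lifting $\tilde{\mu}$ together with the associativity of the ring multiplication on $R$, which makes the two induced maps on $H_0$ agree on the nose.

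Now I invoke the comparison theorem for projective resolutions. The source $\textbf{F} \otimes \textbf{F} \otimes \textbf{F}$ is a complex of free, hence projective, $S$-modules concentrated in non-negative degrees, and the target $\textbf{F}$ is an acyclic augmented resolution of $R$. Under these hypotheses any two chain maps that induce the same map on $H_0$ are chain homotopic; applied to $\mu_L$ and $\mu_R$ this yields exactly homotopy-associativity. I do not expect any real obstacle here—both steps are completely formal—so the only thing to keep track of is that the Koszul sign conventions in the definition of $\delta$ match those used throughout; no delicate cancellation arises.
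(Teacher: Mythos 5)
Your argument is correct and follows the same route as the paper: property \eqref{item:prop1} is exactly the commutativity $d\circ\mu = \mu\circ\delta$ unwound via the definition of $\delta$, and for \eqref{item:prop2} the paper likewise observes that $\mu\circ(\mu\otimes\id)-\mu\circ(\id\otimes\mu)$ lifts the zero map $R^{\otimes 3}\to R$ (associativity of $R$) and is therefore null-homotopic by the comparison theorem. The only cosmetic difference is that you phrase it as two chain maps lifting the same map on $H_0$ rather than their difference lifting zero, which is the same statement.
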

\begin{proof} 
Property \eqref{item:prop1} is just the commutativity of the diagram above  with the induced chain map $\mu$. To verify the second property we have to show that the chain map 
$$ \rho \ := \mu \circ (\mu \otimes \id_\textbf{F} ) - \mu \circ (\id_\textbf{F}  \otimes \mu )\colon \textbf{F}^{\otimes 3}  \rightarrow \textbf{F}$$ is homotopic to zero. But $\rho$ is a lift of the map $\tilde{\mu} \circ (\tilde{\mu} \otimes \id_{R} ) - \tilde{\mu} \circ (\id_{R} \otimes \tilde{\mu})\colon R^{\otimes 3} \rightarrow R$ which is the zero map since $R$ is associative. Thus $\rho$ is homotopic to zero.
 \end{proof}
 \begin{remark} Similarly one can check that the map $\mu$ in Lemma \ref{lem_liftmultiplication} is homotopy-commutative. 
 \end{remark}
So it remains to show that there is a lift 
$\mu$ of  $\tilde{\mu}$ which satisfies also properties \eqref{item:unitelement} and \eqref{item:commut}.
To do this, we will first introduce the symmetric square of the complex $\textbf{F}$ as in \cite{BuchsbaumEisenbud77}. 
Let $M$ be the submodule of $\textbf{F} \otimes \textbf{F}$ generated by the relations
$$ \{f \otimes g - (-1)^{|f| \cdot |g|}g \otimes f \mid f,g \in \textbf{F} \textup{ homogeneous} \}.$$ Since $\delta(M) \subseteq M$, the module
\[ S_2(\textbf{F})  = (\textbf{F} \otimes \textbf{F})/ M \] 
inherits the structure of a complex of $S$-modules (with induced differentials $\bar{\delta}$). 
Let $n \geq 0$ and set $V = \oplus_{i + j = n, \ i<j} F_i \otimes F_{j}$. Then 
\[ S_2(\textbf{F})_n \cong
\begin{cases}
  V  & \text{if $n$ is odd}, \\
  V \oplus \bigwedge^{2}(F_{n/2}) & \text{if } n \equiv 2 \mod 4 , \\
  V \oplus S_2(F_{n/2}) & \text{if } n \equiv 0 \mod 4.
\end{cases} \]
In particular, $S_2(\textbf{F})$ is a complex of free $S$-modules. 
Let $\pi\colon \textbf{F} \otimes \textbf{F} \rightarrow S_2(\textbf{F})$ be the map of chain complexes where each $\pi_i$ is the canonical projection from $(\textbf{F} \otimes \textbf{F})_i$ to $S_2(\textbf{F})_i$. 
From the definition of $S_2(\textbf{F})$ we see that any lift $\mu\colon \textbf{F} \otimes \textbf{F} \rightarrow \textbf{F}$ of $\tilde{\mu}$ which factors through $\pi$ satisfies property \eqref{item:commut}.
\begin{proof}[Proof of Proposition \ref{prop_multiplication}]
To begin with, we set
 $\alpha_0 := \tilde{\mu} \circ (d_0 \otimes d_0)\colon F_0 \otimes F_0 \rightarrow R$.
Since $R$ is a commutative ring, 
$\alpha_0$ factors through $\pi_0$ yielding a homomorphism 
$ \gamma_0\colon S_2(\textbf{F})_0  \cong S_2(F_0) \rightarrow R$. 
Now, as $S_2(\textbf{F})_0$ is free, there is a map $\beta_0\colon S_2(\textbf{F})_0 \rightarrow F_0$ such that $ \gamma_0 = d_0 \circ \beta_0.$
From this, setting $\mu_0 = \beta_0 \circ \pi_0$, we obtain a  commutative diagram 
\begin{equation}\label{diag_comm}	
\begin{tikzpicture}[baseline=(current bounding box.east)]
  \matrix (m) [matrix of math nodes,row sep=1.9em,column sep=2.1em,minimum width=2em,text height=1.5ex, text
  depth=0.25ex]
  { & R \otimes R & & & & & F_0 \otimes F_0 \\
    &                   & & & & & \\ 
    &                   & & & & \color{red}S_2(\textbf{F})_0  &   \\
   0& R             &  & & & & F_0 
     \\};
  \path[-stealth]
    (m-1-2) edge node [left] {$\tilde{\mu}$} (m-4-2)
    (m-4-7) edge node [above] {$d_0$} (m-4-2)
    (m-1-7) edge node [above] {$d_0 \otimes d_0$} (m-1-2)
    (m-4-2) edge (m-4-1)
    (m-1-7) edge node[left=0.5em]{$\alpha_0$} (m-4-2)
    (m-1-7) edge[red] node[red,auto] {$\pi_0$} (m-3-6)
    (m-3-6) edge[red] node[red,auto] {$\gamma_0$} (m-4-2)
    (m-3-6) edge[red] node[red,auto] {$\beta_0$} (m-4-7)
    (m-1-7) edge node[left] {$\mu_0$} (m-4-7);
\end{tikzpicture}
\end{equation}
Note that for any element $g \in F_0$ we have and $\gamma_0(\pi_0(e_0 \otimes g))  = \gamma_0(\pi_0(g \otimes e_0))$ and 
\[ \gamma_0(\pi_0(e_0 \otimes g))  =\alpha_0(g \otimes e_0) = d_0(g).\]
Hence, we can choose $\beta_0$ in such a way that $ \beta_0(\pi_0(e_0 \otimes g)) = \beta_0(\pi_0(g \otimes e_0)) = g$. Consequently, 
$$ \mu_0(e_0 \otimes g) = \mu_0(g \otimes e_0) = g$$ for all $g \in F_0$.  
By the choice of $\gamma_0$, 
$$ 0 \leftarrow R \xleftarrow{\gamma_0} S_2(\textbf{F})_0 \xleftarrow{\bar{\delta}_1}  S_2(\textbf{F})_1 \xleftarrow{\bar{\delta}_2}  \ldots$$ is a complex of free $S$-modules. 

Comparing this complex $S_2(\textbf{F})$ with $\textbf{F}$ we can extend $\beta_0$ to a map of chain complexes $\beta\colon S_2(\textbf{F}) \rightarrow  \textbf{F}$:
\begin{center}
\begin{tikzpicture}
\matrix(m)[matrix of math nodes,
row sep=2.6em, column sep=2.8em,
text height=1.5ex, text depth=0.25ex]
{0 & R & S_2(\textbf{F})_0 & S_2(\textbf{F})_1 & S_2(\textbf{F})_2 & \cdots \\
0 & R &  F_0 & F_1 & F_2 & \cdots \\};
\path[->,font=\scriptsize,>=angle 90]
(m-1-2) edge (m-1-1)
        edge node[left] {$\id$} (m-2-2) 
(m-1-3) edge node[above] {$\gamma_0$} (m-1-2) 
        edge[red] node[left,red] {$\beta_0$} (m-2-3) 
(m-1-4) edge node[above] {$\bar{\delta}_1$} (m-1-3)
        edge[red] node[left,red] {$\beta_1$} (m-2-4) 
(m-1-5) edge node[above] {$\bar{\delta}_2$} (m-1-4)
        edge[red] node[left,red] {$\beta_2$} (m-2-5) 
(m-1-6) edge (m-1-5) 
(m-2-2) edge (m-2-1)
(m-2-3) edge node[above] {$d_0$} (m-2-2) 
(m-2-4) edge node[above] {$d_1$} (m-2-3)
(m-2-5) edge node[above] {$d_2$} (m-2-4)
(m-2-6) edge (m-2-5) 
;
\end{tikzpicture}
\end{center}
As above, we can choose the maps $\beta_i$ successively so that
$$ \beta_i(\pi_i(e_0 \otimes g)) = \beta_i(\pi_i(g \otimes e_0)) = g$$
for any $g \in F_i$:
Indeed,  if $g \in F_1$, then
$$  \beta_0(\bar{\delta}_1(\pi_1(e_0 \otimes g))) = \beta_0(\pi_0(\delta_1(e_0 \otimes g))) = \beta_0(\pi_0(e_0 \otimes d_1(g))) = d_1(g).$$
Hence, we can choose $\beta_1$ so that 
$ \beta_1(\pi_1(e_0 \otimes g)) = \beta_1(\pi_1(g\otimes e_0)) = g$ and we proceed similarly for $i \geq 2$. 
\par \smallskip
Setting $\mu_i = \beta_i \circ \pi_i$ for $i \geq 1$, we obtain a chain map $\mu\colon \textbf{F} \otimes \textbf{F} \rightarrow  \textbf{F}$ which factors through $\beta\colon S_2(\textbf{F}) \rightarrow \textbf{F}$ as desired. 
Finally, we summarize the constructed maps in one diagram:
\begin{center}
\begin{tikzpicture}
  \matrix (m) [matrix of math nodes, row sep=3.8em,
    column sep=1.2em,text height=1.5ex, text depth=0.25ex]{
    & & & R \otimes R& & (\textbf{F} \otimes \textbf{F})_0& & (\textbf{F} \otimes \textbf{F})_1 & & \cdots \\
    & & & & S_2(\textbf{F})_0 & & S_2(\textbf{F})_1 & & \cdots &\\
   0& & R & & F_0 & & F_1 & & \cdots &\\};
  \path[-stealth]
        (m-1-8) edge node[above] {$\delta_1$} (m-1-6) 
                edge node[left] {$\pi_1$} (m-2-7)
         (m-1-6) edge (m-3-5) 
         (m-1-6) edge (m-3-5) 
         (m-2-7) edge [-,line width=8pt,draw=white] (m-2-5) 
                edge node[above] {$\bar{\delta}_1$} (m-2-5)
                 edge node[left] {$\beta_1$} (m-3-7)
         (m-2-5) edge node[left] {$\beta_0$} (m-3-5)
         (m-3-7) edge node[above] {$d_1$} (m-3-5)
         (m-1-6) edge node[left] {$\pi_0$} (m-2-5)
         (m-1-8) edge (m-3-7) 
         (m-2-9) edge [-,line width=8pt,draw=white] (m-2-7) 
                 edge (m-2-7)
         (m-3-9) edge (m-3-7)       
         (m-1-10)edge (m-1-8)   
         (m-1-6) edge node[above] {$d_0 \otimes d_0 $} (m-1-4)
         (m-3-3) edge (m-3-1)
         (m-3-5) edge node[above] {$d_0$}(m-3-3)
         (m-2-5) edge node[above left] {$\gamma_0$} (m-3-3)
         (m-1-4) edge node[above left] {$\tilde{\mu}$} (m-3-3)
        ;
\end{tikzpicture}
\end{center} 
\end{proof}

\section{Proof of Theorem ~\texorpdfstring{\ref{thm_mainthm}}{\ref*{thm_mainthm}}}
As a next step towards proving the remaining part of Theorem \ref{thm_mainthm} we use the multiplication on $\textbf{F}$ from the last section and the Gorenstein property of $R$ to deduce an isomorphism between the given resolution of $R$ an its dual.
So let us assume that $R$ is a Gorenstein algebra of odd codimension $c = 2m+1$  
We know from the introductory part that 
 $$ 0 \leftarrow \Ext^c_S(R,S) \leftarrow F_c^\ast \xleftarrow{d_c^\ast} F_{c-1}^\ast \leftarrow \cdots \leftarrow  F_0^\ast \leftarrow 0$$
 is a minimal free resolution of $\Ext^c_S(R,S)$ 
 Furthermore, using the Gorenstein property, there exists an isomorphism $\tilde{\psi}\colon \Ext^c_S(R,S) \rightarrow R$ which lifts to an isomorphism $\psi_0\colon F_c^\ast \rightarrow F_0$.  
 Let $\sigma \in F_c^\ast$ be the preimage of $e_0 \in F_0$ under this isomorphism.
 For any $0 \leq i \leq c$ we define a map $ h_i\colon \ F_i \otimes F_{c-i} \rightarrow S$
by sending $a \otimes b$ to $\sigma(ab)$. Moreover, for each $i$, this induces a map
\begin{align*}
 t_i\colon & F_i \rightarrow  \Hom_S(F_{c-i},S) =  F_{c-i}^{\ast}\\
      & a \mapsto t_i(a) \colon  F_{c-i} \rightarrow S \\
                       & \quad  \quad \quad  \quad \quad b \mapsto h_i(a \otimes b)  = \sigma(ab).
\end{align*} 
\begin{lemma}\label{lem_mapsresdual}
In the diagram
\begin{center}
\begin{tikzpicture}
\matrix(m)[matrix of math nodes,
row sep=2.6em, column sep=2.8em,
text height=1.5ex, text depth=0.25ex]
{ F_0 & F_1 & \cdots & F_m & F_{m+1} & \cdots \\
 F_c^{\ast} & F_{c-1}^{\ast} & \cdots  & F_{m+1}^\ast & F_{m}^\ast & \cdots \\};
\path[->,font=\scriptsize,>=angle 90]
(m-1-2) edge node[above] {$d_1$}(m-1-1)
        edge node[left] {$t_1$} (m-2-2)
(m-1-3) edge (m-1-2)
(m-1-4)  edge node[above] {$d_{m}$} (m-1-3)
         edge node[left] {$t_{m}$} (m-2-4) 
(m-2-4) edge  (m-2-3)
(m-2-3) edge (m-2-2) 
(m-2-2) edge node[above] {$d_c^{\ast}$}  (m-2-1)   
(m-1-1) edge node[left] {$t_0$} (m-2-1) 
(m-1-5) edge node[above] {$d_{m+1}$} (m-1-4)
        edge node[left] {$t_{m+1}$} (m-2-5)  
(m-2-5) edge node[above] {$d_{m+1}^{\ast}$} (m-2-4)
(m-1-6) edge (m-1-5)
(m-2-6) edge node[above] {$d_{m}^\ast$}  (m-2-5)
;
\end{tikzpicture} 
\end{center}
the rectangles with homomorphism $d_i \colon F_i \rightarrow F_{i-1}$ are commutative if $i$ is an odd number and anti-commutative if $i$ is even. For the diagram in the middle we have

\[ t_m \circ d_{m+1}  = (-1)^{m} d_{m+1}^\ast \circ t_{m+1}
\]
Moreover, $t_{c-i} = t_i^\ast$.
\end{lemma}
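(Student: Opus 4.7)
The plan is to prove everything by a direct computation using the definition $t_i(a)(b)=\sigma(ab)$, the Leibniz rule from Proposition~\ref{prop_multiplication}(i), the commutativity of $\mu$ from Proposition~\ref{prop_multiplication}(iv), and the crucial observation that $F_{c+1}=0$ since the resolution has length $c=2m+1$.

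For the rectangle containing $d_i\colon F_i\to F_{i-1}$, I would take arbitrary $a\in F_i$ and $b\in F_{c-i+1}$, so that $ab\in F_{c+1}=0$. The two compositions to compare are
\[
 t_{i-1}(d_i(a))(b)=\sigma\bigl(d_i(a)\cdot b\bigr),\qquad d_{c-i+1}^\ast(t_i(a))(b)=t_i(a)\bigl(d_{c-i+1}(b)\bigr)=\sigma\bigl(a\cdot d_{c-i+1}(b)\bigr).
\]
Applying the Leibniz rule to $ab=0$ gives $0=d(ab)=d_i(a)\cdot b+(-1)^{i}a\cdot d_{c-i+1}(b)$, hence $d_i(a)\cdot b=(-1)^{i+1}a\cdot d_{c-i+1}(b)$. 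Evaluating $\sigma$ on both sides yields the identity
\[
 t_{i-1}\circ d_i=(-1)^{i+1}\,d_{c-i+1}^\ast\circ t_i,
\]
which is commutative for $i$ odd and anti-commutative for $i$ even, as claimed. The middle square is the special case $i=m+1$, giving $(-1)^{m+2}=(-1)^m$ in front of $d_{m+1}^\ast\circ t_{m+1}$, which is exactly the displayed formula.

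For the identification $t_{c-i}=t_i^\ast$, I would evaluate both sides on $b\in F_{c-i}$ and $a\in F_i$. By definition,
\[
 t_{c-i}(b)(a)=\sigma(b\cdot a),\qquad t_i^\ast(b)(a)=t_i(a)(b)=\sigma(a\cdot b).
\]
Graded commutativity of $\mu$ gives $b\cdot a=(-1)^{i(c-i)}a\cdot b$. Since $c=2m+1$ is odd, one of $i$ and $c-i$ is even, so $i(c-i)$ is always even; thus the sign is $+1$ and the desired equality follows.

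The only real subtlety is the sign bookkeeping, but once one observes that $|a|+|b|=c+1$ forces $ab=0$ (which is what lets the Leibniz rule be used as an actual relation), the proof reduces to the routine verifications sketched above. No further ingredients beyond Proposition~\ref{prop_multiplication} are needed, and in particular condition $(\diamondsuit)$ plays no role here; it will enter only in the next step, when these maps $t_i$ are promoted to isomorphisms.
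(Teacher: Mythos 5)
Your proof is correct and follows essentially the same line as the paper's: use $t_i(a)(b)=\sigma(ab)$, the Leibniz rule applied to the vanishing product $ab\in F_{c+1}=0$, and the graded commutativity of $\mu$ (using that $c$ is odd, so $i(c-i)$ is always even) for the duality $t_{c-i}=t_i^\ast$. Your presentation of the unified sign formula $t_{i-1}\circ d_i=(-1)^{i+1}d_{c+1-i}^\ast\circ t_i$, from which both the (anti-)commutativity claim and the middle-square identity fall out as special cases, is a slightly tidier packaging of the same computation.
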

\begin{proof}
Let $i \in \{ 1,\ldots,c\}$, and let $a \in F_i$. We have to show that $t_{i-1}(d_i(a))  = d_{c+1-i}^\ast(t_i(a_i)) \in F_{c+1-i}^\ast$. For any element $b \in F_{c+1-i}$ we have 
\begin{align*}
 t_{i-1}(d_i(a))(b) &= \sigma(d_i(a)b)    \\
 d_{c+1-i}^\ast(t_i(a))(b) & = \sigma(ad_{c+1-i}(b))
\end{align*}
On the other hand, as $ab = 0$,
$$0 = d_{i}(a)b + (-1)^{i}ad_{c+1-i}(b).$$ 
Hence, for an odd integer $i$ we have $d_i(a)b = ad_{c+1-i}(b)$, whereas for an even integer $i$ we have $d_i(a)b = -ad_{c+1-1}(b)$. This shows the first claim. Next let $i \in \{0,\ldots,m\}$, and let $a_i \in F_i$, $b_{c-i} \in F_{c-i}$. Then $t_i(a_i)(b_{c-i}) = \sigma(a_ib_{c-i})$ and $t_{c-i}(b_{c-i})(a_i) = \sigma(b_{c-i}a_i)$. 
Thus, the second claim follows from the fact that 
$a_ib_{c-i} = b_{c-i}a_i$ for every $i$.
\end{proof}
As a final step, we show that the $t_i$ are indeed isomorphisms. Note that in the setting of \cite{BuchsbaumEisenbud77}  we have $F_0 = S = F_c$ which implies directly that $t_0$ is the identity map (lifting the identity on the corresponding rings) and thus, that each $t_i$ is an isomorphism. To show this in our setting, we make now use of Proposition \ref{prop_iso}. 
\begin{proposition}\label{prop_chainmapiso}
For each $i=0,\ldots,c$, the map $t_i$ is an isomorphism.
\end{proposition}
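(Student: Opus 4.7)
The plan is to interpret the family $(t_i)$ as (up to fixed signs) a chain map between two minimal free resolutions of $R$ and then to argue that the induced map on zeroth homology is an isomorphism; a standard comparison principle for minimal free resolutions then forces each $t_i$ to be an isomorphism. Concretely, absorbing the discrepancies recorded in Lemma \ref{lem_mapsresdual} into a sequence of signs $\epsilon_i \in \{\pm 1\}$, the family $(\epsilon_i t_i)$ is an honest chain map from $\textbf{F}$ to the shifted dual complex $(\textbf{F}^\ast[c], d^\ast)$. By hypothesis $\textbf{F}$ resolves $R$, and by the Gorenstein property the dual complex resolves $\Ext^c_S(R,S) \cong R$; both resolutions are minimal because over a local ring minimality of a complex of free modules is preserved under transposition of the differentials.

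The next step is to identify the induced map on zeroth homology. Since $t_0(e_0)(b) = \sigma(e_0 b) = \sigma(b)$, one has $t_0(e_0) = \sigma$, so the induced $S$-linear map $\bar{t}_0 \colon R \to \Ext^c_S(R,S)$ sends $1_R = d_0(e_0)$ to the class $[\sigma]$. By construction $\sigma = \psi_0^{-1}(e_0)$, where $\psi_0$ is a chosen lift of the Gorenstein isomorphism $\tilde{\psi} \colon \Ext^c_S(R,S) \to R$. Consequently $[\sigma] = \tilde{\psi}^{-1}(1_R)$, so that $\bar{t}_0(1_R) = \tilde{\psi}^{-1}(1_R)$.

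Finally, I would apply Proposition \ref{prop_iso} and Remark \ref{rem_isob} with $B := \Ext^c_S(R,S)$, which is an $R$-module isomorphic to $R$ as an $\tilde{S}$-module via $\tilde{\psi}$. These results say that every $S$-linear map $R \to B$ is multiplication by a unique element of $B$; in particular, such a map is automatically $R$-linear and determined by its value at $1_R$. Applied to the two $S$-linear maps $\bar{t}_0$ and $\tilde{\psi}^{-1}$, which agree on $1_R$, this yields $\bar{t}_0 = \tilde{\psi}^{-1}$, which is an isomorphism. The main subtlety lies precisely here: without condition $(\diamondsuit)$ one would only know that $\bar{t}_0$ and $\tilde{\psi}^{-1}$ coincide on $1_R$ as $S$-linear maps, with no way to upgrade this to equality. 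Once $\bar{t}_0$ is an isomorphism, the standard lifting lemma for minimal free resolutions concludes that $(\epsilon_i t_i)$, and hence each $t_i$, is an isomorphism.
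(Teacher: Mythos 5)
Your proof is correct and takes essentially the same route as the paper's: both insert signs to make $(t_i)$ an honest chain map between the two minimal free resolutions, track $e_0$ to see that the induced degree-zero map sends $1_R$ to $\tilde{\psi}^{-1}(1_R)$, and then invoke the consequence of condition $(\diamondsuit)$ (Proposition \ref{prop_iso}/Remark \ref{rem_isob}) to promote agreement at $1_R$ to equality of maps. The only cosmetic difference is that the paper composes with $\tilde{\psi}$ and applies Proposition \ref{prop_iso} to the endomorphism $\tilde{\psi}\circ\tilde{t}$ of $R$, while you apply Remark \ref{rem_isob} directly to maps into $\Ext^c_S(R,S)$.
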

\begin{proof}
Let $\tilde{t}\colon R \rightarrow \Ext^c_S(R,S)$ be the $S$-linear map induced by the chain maps
in Lemma \ref{lem_mapsresdual}.
Since both complexes are minimal free resolutions, it is enough to show that $\tilde{t}$ is an isomorphism. Now let $\tilde{\psi}\colon \Ext^c_S(R,S) \rightarrow R$ be the isomorphism introduced at the beginning of this section with induced isomorphism $\psi_0: F_c^\ast \rightarrow F_0$. Let us see what happens with the element $e_0 \in F_0$ in the following diagram:
\begin{center}
	\begin{tikzpicture}
	\matrix(m)[matrix of math nodes,
	row sep=2.4em, column sep=2.8em,
	text height=1.5ex, text depth=0.25ex]
	{ 0 & R & F_0 & \cdots \\
	  0 & \Ext^c_S(R,S) & F_c^\ast & \cdots \\
	  0 & R & F_0 & \cdots \\};
	\path[->,font=\scriptsize,>=angle 90]
	(m-1-2) edge (m-1-1)
	        edge node[left] {$\tilde{t}$} (m-2-2)
	(m-1-3) edge node[above] {$d_0$} (m-1-2)
	        edge node[left] {$t_0$} (m-2-3)
	(m-1-4) edge (m-1-3)
	(m-2-2) edge (m-2-1)
	        edge node[left] {$\psi$} (m-3-2)
	(m-3-2) edge (m-3-1)
	(m-3-3) edge node[above] {$d_0$} (m-3-2)
    (m-2-3) edge  (m-2-2)
            edge node[left] {$\psi_0$} (m-3-3)
    (m-2-4) edge  (m-2-3)
    (m-3-4) edge  (m-3-3)
	;
	\end{tikzpicture} 
\end{center}
 Under the homomorphism $t_0$,  the element $e_0 \in F_0$ is mapped to $\sigma$. Indeed for any $b \in F_c$ we  have 
$t_0(e_0)(b) = \sigma(e_0b) = \sigma(b)$.  Hence, by the choice of  $\sigma$ we know that $\psi_0(t_0(e_0)) = e_0$ and, consequently, $\tilde{\psi}(\tilde{t}(1_R)) = 1_R$ as $d_0(e_0) = 1_R$. Now, using Proposition \ref{prop_iso}, we know that the natural homomorphism $R \rightarrow \Hom_{\ts}(R,R)$ is an isomorphism which implies that $\tilde{\psi} \circ \tilde{t}$ is the identity on $R$. Note that both $\tilde{\psi}$ and $\tilde{t}$ are also $\ts$-linear. Thus, since $\tilde{\psi}$ isomorphism, so is $\tilde{t}$. 
\end{proof}
The proof of the remaining part of Theorem $\ref{thm_mainthm}$ is a direct consequence from the previous results:
\hspace{-1pt}
\begin{proof}[Proof of Theorem \ref{thm_mainthm}, "Gorenstein $\Rightarrow$ (skew-) symmetric resolution"]
By Proposition \ref{prop_chainmapiso}, the homomorphism  $t_{m+1}\colon F_{m+1} \rightarrow F_{m}^\ast$ is an isomorphism. Then $\widetilde{d_{m+1}} :=  d_{m+1} \circ t_{m+1}^{-1}$ is a homomorphism from $F_m^{\ast} $ to $F_m$ and, as $ t_{m+1}^{\ast} \circ d_{m+1} =  t_m\circ d_{m+1} = (-1)^m d_{m+1}^{\ast} \circ t_{m+1} $, we have:
\begin{align*}
\widetilde{d_{m+1}}^{\ast} & = (d_{m+1} \circ t_{m+1}^{-1})^{\ast} 
                    = ({t_{m+1}^{-1}})^{\ast} \circ d_{m+1}^{\ast} \\ 
                   & = (-1)^m d_{m+1}  \circ t_{m+1}^{-1} 
                    = (-1)^m \widetilde{d_{m+1}}.
\end{align*}
Consequently,
$$ 0 \leftarrow R \leftarrow F_0 \xleftarrow{d_1} F_1 \leftarrow \cdots \leftarrow F_{m} \xleftarrow{\widetilde{d_{m+1}}} F_m^\ast \leftarrow \cdots \leftarrow 
F_1^{\ast} \xleftarrow{d_1^{\ast}} F_0^{\ast} \leftarrow 0$$ 
is a minimal free resolution of $R$ with symmetric or alternating middle map as desired. 
\end{proof}
\section{The graded case}
We reformulate our structure theorem in the graded case. The proofs are entirely along the same line as in the local case and are omitted here.  
\begin{definition}
Let $S$ be a positively graded polynomial ring, and let $R$ be a finite graded $S$-algebra. By $c = \dim S - \dim_S R$ we denote the codimension of $R$. Then $R$ is called a \textit{Gorenstein algebra of codimension $c$ and twist $t \in \Z$} if 
$$ R \cong \Ext^c_S(R,S(t))$$ 
as $R$-modules.
\end{definition}
\begin{theorem}\label{thm_mainthmgraded}
Let $S$ be a positively graded polynomial ring, and let $R$ be any graded ring. Let $\varphi \colon S \rightarrow R$ be a homogeneous ring homomorphism which induces on $R$ the structure of a finitely generated graded $S$-module. Furthermore, let $\tilde{S}  = S/\ann_S R$ and $\tilde{\varphi}\colon \tilde{S} \rightarrow R$ be the induced injective homomorphism. Assume that the condition 
$$ (\diamondsuit) \text{\quad  $\tilde{\varphi}_p \colon \tilde{S}_p \rightarrow R_p$ is an isomorphism for all minimal primes of $\tilde{S}$ }$$
holds.
If the codimension 
$ c = \dim S - \dim_S R$ 
is an odd integer $2m+1$, then the following are equivalent:
\begin{itemize}
\item $R$ is a Gorenstein algebra of codimension $c$ and twist $t$.
\item There exists a minimal free resolution of $R$ as an $S$-module of type
\begin{equation}
 0 \leftarrow R \xleftarrow{d_0}
  F_0  \xleftarrow{d_1} F_1 \leftarrow \cdots \xleftarrow{d_{m}}  F_m \xleftarrow{d_{m+1}} F_m^\ast(t) \xleftarrow{d_{m}^\ast} \cdots \leftarrow F_1^\ast(t) \xleftarrow{d_1^\ast} F_0^\ast(t) \leftarrow 0,
\end{equation}
where $F_0  = S \oplus  F_0'$ for some graded free module $F_0'$, $d_0 = ( \varphi  \mid  d_0')$ and $d_{m+1}^\ast = (-1)^m d_{m+1}$.
\end{itemize}
\end{theorem}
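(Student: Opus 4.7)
The plan is to reduce everything to the local-case argument given in Sections 2 and 3, with two systematic modifications: replace every construction by its graded analogue and carry the twist $t$ through the dualities. First I would observe that the graded analogue of Proposition \ref{prop_iso} holds with essentially no change: the argument only uses associated primes, localization at minimal primes of $\tilde{S}$, and the identification $\Hom_{\tilde S_p}(R_p,R_p)\cong\tilde S_p$ coming from $(\diamondsuit)$, all of which remain valid graded-locally. Consequently, for any graded $R$-module $B$ that is isomorphic to $R$ as a graded $\tilde S$-module, the natural map $B\to\Hom_{\tilde S}(R,B)$ is an isomorphism of graded modules. This is the only auxiliary fact needed to handle graded $\tilde S$-linear chain-induced maps.

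Next I would redo Section 2 homogeneously. Since $\tilde{\mu}\colon R\otimes_S R\to R$ is a homogeneous map of degree $0$ and the chosen minimal free resolution $\mathbf{F}$ is graded, each of the lifts $\beta_i$ constructed in Proposition \ref{prop_multiplication}, and hence the product $\mu$, can be chosen homogeneous of degree $0$. Properties \eqref{item:prop1}--\eqref{item:commut} then hold verbatim in the graded category. No twist appears here, because the multiplication lives on $\mathbf{F}\otimes\mathbf{F}$ and lands in $\mathbf{F}$.

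For the direction "Gorenstein $\Rightarrow$ (skew-)symmetric resolution", I would apply $\Hom_S(-,S(t))$ to $\mathbf{F}$ to obtain a minimal graded free resolution
\[
0\leftarrow\Ext^c_S(R,S(t))\leftarrow F_c^\ast(t)\xleftarrow{d_c^\ast}F_{c-1}^\ast(t)\leftarrow\cdots\leftarrow F_0^\ast(t)\leftarrow 0
\]
of $\Ext^c_S(R,S(t))$. Composing with the graded isomorphism $\tilde{\psi}\colon\Ext^c_S(R,S(t))\to R$ supplied by the twisted Gorenstein hypothesis and lifting yields a homogeneous chain isomorphism with $\psi_0\colon F_c^\ast(t)\to F_0$ of degree $0$. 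Let $\sigma\in F_c^\ast(t)$ be the preimage of $e_0$; the maps $h_i\colon F_i\otimes F_{c-i}\to S(t)$, $a\otimes b\mapsto\sigma(ab)$, and the induced $t_i\colon F_i\to F_{c-i}^\ast(t)$ are then homogeneous of degree $0$. The arguments of Lemma \ref{lem_mapsresdual} and Proposition \ref{prop_chainmapiso} carry over without change, using the graded Proposition \ref{prop_iso} in the last step to see that $\tilde\psi\circ\tilde t$ is the identity of $R$. Setting $\widetilde{d_{m+1}}:=d_{m+1}\circ t_{m+1}^{-1}\colon F_m^\ast(t)\to F_m$ and redoing the symmetry computation gives the required resolution with $\widetilde{d_{m+1}}^\ast=(-1)^m\widetilde{d_{m+1}}$, where, as in the statement, the dual on $F_m^\ast(t)$ is taken with respect to $S(t)$ so that source and target match.

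Conversely, given a resolution of the displayed self-dual shape, applying $\Hom_S(-,S(t))$ produces a minimal free resolution of $\Ext^c_S(R,S(t))$ whose terms and differentials agree (up to the sign $(-1)^m$ in the middle) with those of $\mathbf{F}$; comparing the two resolutions by the identity in each degree yields a graded $\tilde S$-linear chain isomorphism $u\colon R\to\Ext^c_S(R,S(t))$ of degree $0$. The graded Remark \ref{rem_isob} (applied to $B=\Ext^c_S(R,S(t))$) promotes $u$ to an $R$-linear isomorphism, giving $R\cong\Ext^c_S(R,S(t))$ as graded $R$-modules, i.e.\ $R$ is Gorenstein of codimension $c$ and twist $t$. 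The only step requiring care, and really the only point where the graded case differs from the local one, is the bookkeeping of $S(t)$: one must consistently interpret the dualized differentials as maps $F_{i-1}^\ast(t)\to F_i^\ast(t)$ (applying an implicit shift by $(t)$) so that the symmetry relation $d_{m+1}^\ast=(-1)^m d_{m+1}$ is a genuine equality of degree-zero homogeneous maps $F_m^\ast(t)\to F_m$.
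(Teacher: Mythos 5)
Your proposal is correct and is essentially the argument the paper has in mind: the paper explicitly states that the graded proofs ``are entirely along the same line as in the local case and are omitted here,'' and you carry out precisely that adaptation, correctly tracking the twist $t$ through the dualities (in particular noting that the dual $d_{m+1}^\ast$ must be interpreted via $\Hom_S(-,S(t))$ for the self-duality to be an equality of degree-zero maps $F_m^\ast(t)\to F_m$).
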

\section{Examples}
Finally, we present some applications of our structure result for the cases $c = 1$ and $c =3$ originating from algebraic geometry.
\begin{example}
As a first example we study the canonical ring of a smooth general curve  $C$ of genus $6$. Canonically embedded, $C$ is a curve of degree 10 in $\P^5_{\langle x_0,\ldots,x_4,y \rangle}$ and a quadratic section of a del Pezzo surface $Y \subset \P^5$ of degree 5 uniquely determined by $C$. The anti-canonical ring of $Y$ is a Gorenstein ring of dimension 3 and $Y$ is defined by the submaximal Pfaffians of a skew-symmetric $5\times 5$ matrix $m$. Let $T = \Bbbk[x_0,\ldots,x_4,y]$. The canonical ring $R(C)$ is of the form $T/I$, where $I$ is a Gorenstein ideal of codimension 4, with a minimal free resolution of the form
\[0 \leftarrow R(C) \leftarrow T \leftarrow T(-2)^6 \leftarrow T(-3)^5 \oplus T(-4)^5 \leftarrow T(-5)^6 \leftarrow T(-7)^1\leftarrow 0.\]
 Let us assume that 
\[m = \begin{pmatrix}
0  & y & l_1 & l_3 & x_0 \\
-y & 0 & l_2 & l_4 & x_1 \\
-l_1  & -l_2 & 0 & y & x_2 \\
-l_3  & -l_4 & -y & 0 & x_3 \\
-x_0  & -x_1 & -x_2 & -x_3 & 0 \\ 
\end{pmatrix}\]
where $l_1,\ldots,l_4 \in \Bbbk[x_0,\ldots,x_4]$ are some non-zero linear forms, and that the quadratic section $f$ defining $C$ is of the form 
$x_4y + q$ with $q \in S:=\Bbbk[x_0,\ldots,x_4]$ being a quadratic form.   
Projecting from the point $p = (0:0:0:0:0:1)$ to $\P^4_{\langle x_0,\ldots,x_4 \rangle}$, the image of $Y$ is a surface with a unique singular point $q$, whereas $C$ is isomorphic to its image in $\P^4$. Now $R(C)$ is a finitely generated Cohen-Macaulay $S$-module and an $S$-algebra of codimension 3. Furthermore, the condition $(\diamondsuit)$ is satisfied since $\Proj(R(C))$ and $\Proj(S/\ann_S(R(C)))$ are isomorphic curves. Hence, we can apply Theorem \ref{thm_mainthmgraded} to obtain a minimal free resolution of $R(C)$ as an $S$-module of the form 
\[0 \leftarrow R(C) \leftarrow F_0 \xleftarrow{d_1} F_1 \xleftarrow{d_2} F_1^\ast(t) \xleftarrow{d_1^\ast} F_0^\ast(t) \leftarrow 0\]
with an alternating map $d_2$. Moreover, a computation shows that the minimal free resolution is of the form
\[0 \leftarrow R(C) \leftarrow 
\begin{array}{c}
S \\ \bigoplus \\ S(-1) 
\end{array} 
\xleftarrow{d_1} 
\begin{array}{c}
S(-2)^5 \\ \bigoplus \\ S(-3) 
\end{array}
\xleftarrow{d_2} 
\begin{array}{c}
S(-4)^5 \\ \bigoplus \\ S(-3) 
\end{array} 
\xleftarrow{d_1^\ast} 
\begin{array}{c}
S(-5) \\ \bigoplus \\ S(-6) 
\end{array} 
 \leftarrow 0\]
 with a possible choice of differentials
 \begin{small}
 \[ d_ 1 = \begin{pmatrix}
        {x}_{3}{l}_{1}-{x}_{2}{l}_{3}&
        {x}_{3}{l}_{2}-{x}_{2}{l}_{4} & 
        -x_1l_1+x_0l_2 &
        -x_1l_3+x_0l_4 &
         q & x_4q(l) \\
        {x}_{0}&{x}_{1}&{x}_{2}&{x}_{3}&{x}_{4}&
        q
        \end{pmatrix}\]
 \end{small}
where $q(l) = l_1l_4-l_2l_3$, and 
\[ d_2 = 
\begin{pmatrix}
 0& q&{-{x}_{4}{l}_{4}}&{x}_{4}{l}_{2}&-{x}_{3}{l}_{2}+{x}_{2}{l}_{4}&{x}_{1}\\
  & 0&{x}_{4}{l}_{3}&{-{x}_{4}{l}_{1}}&{x}_{3}{l}_{1}-{x}_{2}{l}_{3}&{-{x}_{0}}\\
   & & 0& q
   &{x}_{1}{l}_{3}-{x}_{0}{l}_{4}&{x}_{3}\\
       & & & 0&-{x}_{1}{l}_{1}+{x}_{0}{l}_{2}&{-{x}_{2}}\\
       & &-\text{skew} & & 0 & 0\\
       & & & & & 0
\end{pmatrix}.
\]

 \end{example}

In the first example, we used the (known) general description of the canonical curve $C \subseteq \P^5$
to deduce a minimal free resolution of $R(C)$ as an $S$-module. However, the strength of our structure result lies in a reversed approach, that means constructing an (unknown) variety via some projection to a lower dimensional projective space. We will sketch this approach in the next examples.

\begin{example}
	As an example for codimension $c = 1$ we consider a minimal surface $X$ of general type with $K^2 = 6$, $p_g(X) = h^0(X,K_X) = 4$ and $q(X) = h^1(X,\Osh_X) = 0$ whose canonical system $|K_X|$ has no base points and defines a birational morphism. These surfaces were completely described in \cite{CataneseRegular} by the following method. First, choose a minimal set of algebra generators of the canonical ring $R(X)$. 
	Let $u_0,\ldots,u_3$ be a basis of $H^0(X,K_X)$. Now $h^0(X,2K_X) = K_X^2+ \chi = 11$ implies that we need one further generator in degree 2, denoted by $x_0$. Now let us consider $R(X)$ as an $S := \Bbbk[u_0,\ldots,u_3]$- module, and let $\varphi\colon S \rightarrow R(X)$ be the natural ring homomorphism.  Since the morphism of projective schemes induced by $\varphi$ is birational onto its image, the morphism $\varphi$ satisfies condition $(\diamondsuit)$. Hence, there exists a minimal free resolution of the type 
	$$ 0 \leftarrow R(X) \leftarrow F_0 \xleftarrow{d_1} F_0^\ast(t) \leftarrow 0,$$
	where $t = -5$ and $F_0 = S \oplus S(-2)$. Thus, as a starting point for constructing a surface $X$ with the given invariants we choose a symmetric matrix 
	$$ d_1 = \begin{pmatrix}
	a_{11} & a_{12} \\
	a_{12} & a_{22}
	\end{pmatrix}$$
	whose entries are homogeneous polynomials in $S$ 
    with $\deg(a_{11}) = 5, \deg(a_{12}) = 3$ and $\deg(a_{22}) = 1$. Furthermore, we choose the entries so that $f = \det(d_1)$ defines an irreducible surface in $\P^3$. In \cite{CataneseRegular}, Catanese gives a necessary and sufficient condition, denoted as the rank condition, for $R := \coker d_1$ carrying the structure of a commutative ring:
    \par
    \begingroup
    \leftskip=6cm
   \noindent
    \emph{
	(R.C.) Let $h$ be the number of rows of $d_1$, and let $d_1'$ be the matrix obtained by deleting the first row of $d_1$. Then the ideal generated by the entries of $\bigwedge^{h-1}d_1$ coincides with the ideal generated by the entries of $\bigwedge^{h-1}d_1'$}.
    \endgroup
    \par 
 In this example, the rank condition is satisfied if $a_{11} = c_2a_{12} + c_4{a_{22}}$, where $c_2$ is a quadratic form in $S$ and $c_4$ a quartic form. Furthermore, if (R.C.) is satisfied, we can compute the remaining defining relations of $R(X)$ as an $\Bbbk$-algebra and see that canonical model $\Proj(R(X))$ can be embedded $\P(1^4,2)$ as an complete intersection defined by the relations
\begin{align*}
	a_{1,2}+a_{2,2}x_0 &= 0, \\
	x_0^2 - c_2x_0+c_4 & = 0.
\end{align*}
\end{example}

\begin{example}
 Describing the canonical ring of a numerical Godeaux surface has served as a main motivation for the presented structure result.
 This description is a crucial tool in the construction of numerical Godeaux surfaces in \cite{Schreyer05} and \cite{Stenger18}. Recall that a numerical Godeaux surface $X$ is a minimal surface of general type with $K_X^2 =1$ and $p_g(X) = q(X) = 0$.  The canonical ring $R(X)$ is a finitely generated positively graded $\Bbbk=\C$-algebra. Studying its generators and relations, one can deduce that $R(X)$ is of the form \begin{equation}\label{eq_canringgod}
\Bbbk[x_0,x_1,y_0,\ldots,y_3,z_0,\ldots,z_3,w_0,w_1,w_2]/I,
 \end{equation}
  where $\deg(x_i) =2$, $\deg(y_j) = 3$ and $\deg(z_j) = 4$ and $\deg(w_k) = 5$ and $I$ is a Gorenstein ideal of codimension 10 (for more details and proofs we refer to \cite{Stenger18}, Chapter 3). Thus, the minimal free resolution of $R(X)$ has length 10 and finding a general description seems hopeless. The idea is to study to $R(X)$ as an graded $S := \Bbbk[x_0,x_1,y_0,\ldots,y_3]$-module, or geometrically, via the projection from the canonical model $\Proj(R(X))$ to $\P(2^2,3^4)$. 
\begin{proposition}
Let $X$ be a numerical Godeaux surface, and let $S$, $R(X)$ and $\varphi$ be as above. Then $R(X)$ is a Gorenstein $S$-algebra of codimension 3 and twist -17 satisfying $(\diamondsuit)$.  
\end{proposition}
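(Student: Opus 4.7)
The plan is to verify separately the three requirements that $R(X)$ be a Gorenstein $S$-algebra of codimension $3$ and twist $-17$ satisfying $(\diamondsuit)$: perfectness of $R(X)$ as an $S$-module of codimension $3$, the birationality condition $(\diamondsuit)$, and the existence of an $R(X)$-linear isomorphism $R(X)\cong \Ext^3_S(R(X),S(-17))$. The codimension is immediate: $S$ has Krull dimension $2+4=6$, while \eqref{eq_canringgod} presents $R(X)$ as a domain of dimension $13-10=3$, so $c=\dim S-\dim_S R(X)=6-3=3$.

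For perfectness I would first show that $R(X)$ is Cohen--Macaulay as a graded ring, so that Auslander--Buchsbaum applied to $S\to R(X)$ yields $\pd_S R(X)=\dim S-\depth R(X)=3=c$ and $R(X)$ is therefore a perfect $S$-module. Cohen--Macaulay-ness in turn amounts to $H^1(X_{can},nK_X)=0$ for every $n\in\Z$, via the standard identification $H^i_\mathfrak{m}(R(X))\cong\bigoplus_n H^{i-1}(X_{can},nK_X)$ for $i\geq 2$. This vanishing is Kodaira (or Kawamata--Viehweg) on $X_{can}$ for $n\geq 1$, the hypothesis $q(X)=0$ combined with rationality of the canonical singularities for $n=0$, and Serre duality reducing to the case $n\geq 2$ for $n\leq -1$. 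Condition $(\diamondsuit)$ I would handle next: $R(X)$ being a graded integral domain forces $\tilde{S}$ also to be a domain, and by the remark following Theorem \ref{thm_mainthm} $(\diamondsuit)$ reduces to $Q(\tilde{S})\cong Q(R(X))$, that is, to the birationality of the projection $\Proj R(X)\dashrightarrow \Proj\tilde{S}\subseteq \P(2^2,3^4)$. This birationality is the essential geometric input and is precisely what is established by the detailed analysis of the generators and relations of $R(X)$ in \cite{Stenger18}, Chapter 3, which I would cite directly; I expect this step to be the main obstacle, as it is the only one that does not reduce to standard commutative algebra.

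Finally, for the Gorenstein property and the twist, I use that $X_{can}$ is Gorenstein (its singularities are rational double points), so $R(X)$ is a Gorenstein graded ring with canonical module
\[ \omega_{R(X)}=\bigoplus_n H^0\bigl(X_{can},(n+1)K_X\bigr)\cong R(X)(1) \]
as graded $R(X)$-modules. On the other hand, since the weights $2,2,3,3,3,3$ of the variables of $S$ sum to $16$, one has $\omega_S=S(-16)$, and local duality for the perfect $S$-module $R(X)$ yields $\omega_{R(X)}\cong \Ext^3_S(R(X),S(-16))$ as graded $R(X)$-modules. Equating these two expressions and shifting by $-1$ produces the desired $R(X)$-linear isomorphism $R(X)\cong \Ext^3_S(R(X),S(-17))$, which is the Gorenstein algebra property with twist $t=-17$.
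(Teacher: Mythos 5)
Your argument is essentially correct and self-contained, whereas the paper simply refers to \cite{Stenger18}, Chapter~4 for the proof, so there is no explicit argument in the paper to compare against. Your decomposition into codimension, Cohen--Macaulayness (hence perfectness via Auslander--Buchsbaum), condition $(\diamondsuit)$ via birationality of the tricanonical projection, and the Gorenstein/twist computation via $\omega_{R(X)}\cong R(X)(1)$ and $\omega_S=S(-16)$ is the natural route and matches the remark in the paper that $(\diamondsuit)$ comes from Miyaoka's birationality of $|3K_X|$.

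One small imprecision: you attribute the vanishing $H^1(X_{can},nK_X)=0$ for all $n\geq 1$ to Kodaira/Kawamata--Viehweg, but that covers only $n\geq 2$ (where $(n-1)K_X$ is nef and big); the case $n=1$ is $H^1(X_{can},K_{X_{can}})\cong H^1(X_{can},\Osh_{X_{can}})^\vee=0$, which rests on $q(X)=0$ and rationality of the singularities, exactly as in your $n=0$ case. This is a one-line fix and does not affect the structure of the argument. You should also note, as you implicitly do, that $R(X)$ is module-finite over $S$ (Miyaoka's theorem that $|2K_X|$ and $|3K_X|$ have no common base points), since this is needed both to speak of codimension and to apply Auslander--Buchsbaum.
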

\begin{proof} See \cite{Stenger18}, Chapter 4. 
\end{proof}
\begin{remark} The canonical ring $R(X)$ is a finitely generated $S$-module since the bi- and tricanonical system of $X$ do not have common base point by some classical results of Miyaoka (\cite{Miyaoka}). Moreover, $(\diamondsuit)$ is satisfied because the tricanonical system $|3K_X|$ induces a birational map onto its image in $\P^3$, and so does the morphism $\tilde{\varphi}\colon \Proj(R(X)) \rightarrow \Proj(S)$ with image $\Proj(S/\ann_S(R(X)))$.
\end{remark}
\begin{corollary} Let $X$ be a numerical Godeaux surface. 
Then there exists a minimal free resolution of $R(X)$ as an $S$-module of type
\[ 0 \leftarrow R(X) \leftarrow F_0 \xleftarrow{d_1} F_1 \xleftarrow{d_2} F_1^\ast(-17) \xleftarrow{d_1^\ast} F_0^\ast(-17) \leftarrow 0,\]
where $d_2$ is alternating.
\end{corollary}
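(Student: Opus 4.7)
The plan is to deduce the Corollary as an immediate application of Theorem \ref{thm_mainthmgraded} to the data supplied by the preceding Proposition. Concretely, the Proposition provides everything the structure theorem needs as input: $S = \Bbbk[x_0,x_1,y_0,\ldots,y_3]$ is a positively graded polynomial ring, $\varphi\colon S \rightarrow R(X)$ is the inclusion which presents $R(X)$ as a finite graded $S$-module, and $R(X)$ is a Gorenstein $S$-algebra of codimension $c = 3$ and twist $t = -17$ satisfying $(\diamondsuit)$.

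Since $c = 3 = 2m+1$ with $m = 1$, the graded structure theorem applies and produces a minimal free resolution
\[ 0 \leftarrow R(X) \leftarrow F_0 \xleftarrow{d_1} F_1 \xleftarrow{d_2} F_1^\ast(-17) \xleftarrow{d_1^\ast} F_0^\ast(-17) \leftarrow 0 \]
with the symmetry relation $d_{m+1}^\ast = (-1)^m d_{m+1}$. For $m = 1$ this reads $d_2^\ast = -d_2$, which by Definition \ref{def_alternating} means exactly that $d_2$ is alternating. This gives the claimed shape of the resolution.

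There is no genuine obstacle to overcome here, since all nontrivial input (Cohen-Macaulayness, perfectness as an $S$-module, the Gorenstein isomorphism $R(X) \cong \Ext^3_S(R(X),S(-17))$, and birationality of $\tilde{\varphi}$, which secures $(\diamondsuit)$) is packaged in the cited Proposition and its accompanying Remark. Thus the proof is a one-line verification that the hypotheses of Theorem \ref{thm_mainthmgraded} are met, together with the arithmetic identity $(-1)^1 = -1$ identifying the symmetry type of the middle map as alternating rather than symmetric.
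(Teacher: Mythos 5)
Your proof is correct and matches the paper's intent: the corollary is an immediate application of Theorem \ref{thm_mainthmgraded} with $m=1$, $t=-17$, using the preceding Proposition to supply the hypotheses, and the identity $d_2^\ast = (-1)^1 d_2 = -d_2$ identifies $d_2$ as alternating. The paper leaves the corollary without an explicit proof precisely because it follows in this one-line manner.
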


\begin{remark} The construction method for numerical Godeaux surfaces from \cite{Schreyer05} and \cite{Stenger18}  originally focuses on constructing the projected surface of codimension 3, or equivalently,  an $S$-module $R$ by constructing maps $d_1$ and $d_2$.
	In \cite{Stenger18}, we introduce a sufficient condition on the entries of the matrix $d_1$ for $R$ carrying the structure of a commutative algebra.  If this condition is satisfied, then we can recover the ring structure of $R(X)$, that means all its defining relations in \eqref{eq_canringgod}, from its structure as an $S$-module using Diagram \eqref{diag_comm}. Thus, our structure result gives a powerful tool for constructing (the canonical model) of a numerical Godeaux surface from a suitable model in codimension 3.  For more details, we refer to \cite{Stenger18}, Chapter 5. 
\end{remark}

\end{example}

\noindent \textit{Acknowledgments.} The author would like to thank Frank-Olaf Schreyer for the helpful discussions and comments. This work has been supported by the  German Research Foundation (DFG), TRR  195  "Symbolic  tools  in
mathematics and their applications".

\end{document}